\pgfplotsset{width=12cm,compat=1.17}
\theoremstyle{definition} 
\newtheorem{lemma}{Lemma}[section]
\newtheorem{theorem}{Theorem}[section]
\newtheorem{prop}{Proposition}[section]
\newtheorem{definition}{Definition}[section]
\numberwithin{equation}{section}
\newcommand{\SL}{\text{SL}_2(\mathbb{R})}
\newcommand{\e}[1]{\mathbf{e}_{#1}}
\newcommand{\T}[1]{\begin{bmatrix} 0 & -1 \\ 1 & #1 \end{bmatrix}}
\newcommand{\Tc}{\mathcal{T}}
\newcommand{\M}{\mathcal{M}}
\DeclareMathOperator*{\arginf}{arg\,inf}
\title{Hierarchical band gaps in complex periodic systems}
\author{Lucas Dunckley\thanks{Department of Mathematics, Imperial College London, 180 Queen's Gate, London SW7 2AZ, United Kingdom.} \and Bryn Davies\footnotemark[1]}
\date{}
\begin{document}
\maketitle
\begin{abstract}
    Complex periodic structures inherit spectral properties from the constituent parts of their unit cells, chiefly their spectral band gaps. Exploiting this intuitive principle, which is made precise in this work, means spectral features of periodic systems with very large unit cells can be predicted without numerical simulation. We study a class of difference equations with periodic coefficients and show that they inherit spectral gaps from their constituent elements. This result shows that if a frequency falls in a band gap for every constituent element then it must be in a band gap for the combined complex periodic structure. This theory and its instantaneous utility is demonstrated in a series of vibro-acoustic and mechanical examples.
\end{abstract}

\section{Introduction}

A wealth of advanced technologies depend on the varied (and sometimes surprising) properties exhibited by waves propagating through systems with periodically varied material parameters. These technologies include photonic and phononic crystals capable of precise wave control \cite{joannopoulos1995molding} and metamaterials that display exotic effective properties \cite{smith2004metamaterials}. Such metamaterials have been realised in almost every corner of wave physics and can perform impressive feats such as negative refraction and cloaking \cite{milton2006cloaking, craster2012acoustic}. Partly as a result of this ongoing technological revolution, there is a wealth of literature devoted to studying the spectra of the associated operators (either differential or discrete) with periodic coefficients.

As photonic crystals and metamaterials grow in complexity, there is a need to understand the spectra of periodic systems with increasingly complex unit cells. There has even been some interest in unit cells with fractal geometries \cite{bao2007surface, wen2005resonant}, which presents its own specific theoretical challenges. Under reasonable assumptions (and excluding fractals), the complexity of the unit cell typically does not cause any issues for the standard theoretical tools such as Floquet-Bloch analysis \cite{kuchment2016} or two-scale homogenisation \cite{allaire1992homogenization}. The challenge, however, when faced with a periodic system with a highly complex unit cell, is to develop some intuition for the main features of its spectrum.

The main spectral feature that it is useful to be able to predict is the occurrence of band gaps. These are ranges of frequencies at which waves are unable to propagate through the periodic system. Instead, the amplitude decays exponentially as a function of position and energy is typically reflected back in the direction of incidence. There are a variety of approaches for predicting if a system will support band gaps, without computing the spectrum in full. These are typically based on examining the symmetries of the material and can be framed in terms of either point symmetry groups \cite{makwana2018geometrically}, frieze groups \cite{zhang2019symmetry} or topological invariants \cite{bansil2016colloquium}. 

In this work, we characterise the spectra of complex periodic materials by decomposing the unit cell into simpler component elements. Equivalently, we are interested in complex periodic materials that are formed by combining the unit cells of simpler periodic materials. This is depicted in Figure~\ref{fig:Hero}. The key question is whether the complex material inherits the band gaps of the simpler materials that make up its unit cell. We consider systems with two degrees of freedom, which could include a broad variety of mechanical, acoustic or suitably polarized electromagnetic systems (such that the Helmholtz equation captures the wave dynamics). In such systems, wave propagation can be described by a two-by-two transfer matrix \cite{markos2008wave} and band gaps are precisely the values at which the trace of the transfer matrix is larger than 2 in absolute value. As a result, the question at the heart of this study is whether the set
\begin{equation}
    \M:=\left\{ M\in\SL : |\tr(M)|>2 \right\},
\end{equation}
is closed under matrix multiplication. Here, $\SL$ is the special linear group of $2\times 2$ matrices with real entries and determinant 1. It is clear that this is not true in general. To see this, take 
two arbitrary matrices $A,B\in\M$ such that there exists constants $ \alpha\neq\pm1,a,b,c,d\in\mathbb{R}$ and some $P\in\text{GL}_2(\mathbb{R})$ such that
\begin{equation}
A \sim P^{-1}AP = \begin{bmatrix}
    \alpha & 0 \\ 0 & \frac{1}{\alpha}
\end{bmatrix}\quad\text{and}\quad B \sim P^{-1}BP = \begin{bmatrix}
    a & b \\ c & d
\end{bmatrix},
\end{equation}
where $\sim$ is used to denote matrix similarity. Then, it holds that 
$$\tr(AB) = \tr(P^{-1}ABP) = \alpha a + \frac{d}{\alpha}, $$
from which it is clear that there are uncountably many matrices $B$ with $|a+d|>2$ such that $|\alpha a + d/\alpha|\ge2.$ Therefore, we must constrain the problem and consider a subset of $\M$.

\begin{figure}[!h]
    \centering
    \includegraphics{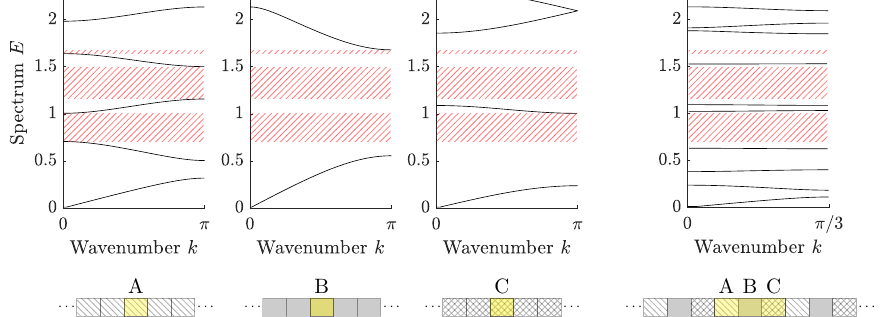}
    \caption{This work develops a theory for hierarchical band gaps in complex periodic systems. These are band gaps in complex periodic systems which are common with the band gaps of all the constituent simple systems. For example, the spectra of three materials A, B and C are shown here, with their common band gaps highlighted and inherited by the combined periodic material with unit cell ABC. This presents a simple way to predict some, but not all, of the band gaps in complex periodic systems.}
    \label{fig:Hero}
\end{figure}

In this work, we prove a partial result about the closure of subsets of $\M$ that is sufficient for yielding insight on applications. Our result concerns products of matrices belonging to the subset $\Tc\subset\M$, given by
\begin{equation}
    \Tc:=\left\{ \T{t} : |t|>2 \right\}.
\end{equation}
We show that any finite product of elements of $\Tc$ will be an element of $\M$. This result can be used to predict the spectra of a class of one-dimensional difference equations with on-site modulation. In particular, it shows that if the intersection of the band gaps of the constituent simple (1-periodic) materials is non-empty, then that interval will be within a band gap of the complex periodic material (as depicted in Figure~\ref{fig:Hero}). We will refer to this band gap that has been inherited from the constituent elements as a \emph{hierarchical band gap}.

This paper will begin by laying out the problem setting in section~\ref{sec:problem}, including specifying how the set $\Tc$ is related to the Floquet-Bloch spectrum of a class of difference equations. Section~\ref{sec:theory} contains the main theoretical results. Finally, in section~\ref{sec:examples}, we will consider some specific examples of mechanical and acoustic systems to demonstrate the value of our theory for predicting spectral features.

\section{Problem setting} \label{sec:problem}

In this work, we consider a canonical problem given by the difference equation
\begin{equation} \label{eq:difference}
    g(i+1) + g(i-1) + V(i)g(i) = F(i,\omega)g(i),
\end{equation}
where $V:\mathbb{Z}\to\mathbb{R}$ and $F:\mathbb{Z}\times\mathbb{R}\to\mathbb{R}$ are given functions and $\omega\in\mathbb{R}$ is a scalar eigenvalue that describes the frequency of waves propagating through the system. The spectral problem in question is to find all the values of $\omega\in\mathbb{R}$ such that \eqref{eq:difference} has a non-zero solution $g:\mathbb{Z}\to\mathbb{R}$. Difference equations of this type arise naturally in many different settings, such as tight-binding models for electron dynamics, one-dimensional wave transmission problems or in systems of coupled mechanical oscillators \cite{markos2008wave}. We will explore some examples in more detail in section~\ref{sec:examples}. The difference equation \eqref{eq:difference} is a typical example of a discrete Hamiltonian with nearest-neighbour coupling and on-site modulation. 

We are interested in the case where $V:\mathbb{Z}\to\mathbb{R}$ is periodic and $F:\mathbb{Z}\times\mathbb{R}\to\mathbb{R}$ is periodic in its first variable. That is, there is some $n>0$ such that $V(i+n)=V(i)$ and $F(i+n,\cdot)=F(i,\cdot)$ for all $n\in\mathbb{Z}$. In this case, the spectrum is known to consist of a countable number of continuous spectral bands. These bands can be resolved using so-called Floquet-Bloch analysis, of which we now summarise the main ideas (for a more detailed overview, see \emph{e.g.} \cite{kuchment2016, brillouin1953wave}). 

The difference equation \eqref{eq:difference} can be rewritten as
\begin{align} 
    \begin{bmatrix}
        g(i) \\ g(i+1) 
    \end{bmatrix} &= -\T{V(i)-F(i,\omega)} \begin{bmatrix}
        g(i-1) \\ g(i)
    \end{bmatrix}. \label{eq:system_1}
\end{align}
Letting $\mathbf{u_{i}} = [g(i-1),g(i)]^\dagger$, where $^\dagger$ is used to denote the transpose, we can rewrite \eqref{eq:system_1} as
\begin{equation} \label{eq:matrix_1}
    \mathbf{u}_{i+1} = -T_i \mathbf{u}_i \quad\text{where}\quad T_i = \T{V(i)-F(i,\omega)}.
\end{equation}
Further, across the unit cell of length $n$ it holds that
\begin{equation} \label{eq:unitcell}
    \mathbf{u}_{n+1} = (-1)^n \left( \prod_{i=1}^n T_i \right) \mathbf{u}_1.
\end{equation}
The key insight of Floquet-Bloch analysis is that the spectrum can be decomposed into modes satisfying
\begin{equation} \label{eq:FB}
\mathbf{u}_{n+1} =e^{\mathrm{i}nk} \mathbf{u}_{1}
\end{equation}
for some $k\in\mathbb{R}$. For each $k$, there exists a discrete set of countably many eigenvalues and the spectral bands of the periodic problem are recovered by taking the union over all possible $k$. Clearly, it is sufficient to take this union over $k\in[-\pi/n,\pi/n)$. Equating \eqref{eq:unitcell} with \eqref{eq:FB} and taking the determinant gives
$$ \det\left((-1)^n\tr\left( \prod_{i=1}^n T_i \right) - e^{\mathrm{i}nk}I\right) = 0.$$
Using the fact that $\det T_i \equiv 1$, this reduces to the well-known dispersion relation
\begin{equation} \label{dispersion}
    2\cos(nk) = (-1)^n\tr\left( \prod_{i=1}^n T_i \right).
\end{equation}
It is clear that \eqref{dispersion} has a real solution for $k$ if and only if the right-hand side falls in $[-2,2]$. Hence, the condition for an eigenvalue $\omega$ to belong to a spectral band of the periodic system is that the associated transfer matrix has trace smaller than 2 in absolute value (with the associated matrix being defined by \eqref{eq:unitcell}, where it should be noted that the matrices in the product depend on $\omega$). 

The intervals of frequencies between spectral bands are known as \emph{band gaps} and are significant as waves at these frequencies are unable to propagate through the material. The aim of this work is to characterise how complex systems inherit band gaps from their constituent elements. Hence, we will show that the product of matrices of the form \eqref{eq:matrix_1} with $|t_i|=|V(i)-F(i,\omega)|>2$ must also have trace larger than 2. We will say that any $\omega$ which satisfies this condition belongs to a \emph{hierarchical band gap} of the complex periodic material.
\begin{definition}
    Given $n\in\mathbb{N}$ and functions $t_i:\mathbb{R}\to\mathbb{R}$ for $1\le i\le n$, a \emph{hierarchical band gap} is some interval $(a,b)\subset\mathbb{R}$ such that for $\omega\in(a,b)$ it holds that both
\begin{equation}\label{eq:hierarchical_def}
    \left|\tr \T{t_i}(\omega)\right| > 2 \quad\text{for all } 1\le i\le n
\end{equation}
and
\begin{equation}\label{eq:hierarchical_def2}
    \left|\tr \left( \,\prod_{i=1}^n \T{t_i(\omega)} \right) \right| > 2.
\end{equation}
\end{definition}

The theory developed in section~\ref{sec:theory} will show that, in fact, \eqref{eq:hierarchical_def} implies \eqref{eq:hierarchical_def2} (\emph{i.e.} the product of $n$ matrices belonging to $\Tc$ must belong to $\M$). This demonstrates how band gaps persist in systems with increasingly complex arrangements. Some examples of such arrangements will be explored in section~\ref{sec:examples}.

\section{Main results} \label{sec:theory}

To see why products of matrices belonging to $\Tc$ must themselves belong to $\M$, we exploit the patterns that occur when multiplying matrices of the form \eqref{eq:matrix_1}. Before stating any results we introduce some notation to simplify statements:
\begin{equation}
    \Pi_k^l := \prod_{i=k}^l T_i, \label{eq:pf_1}
\end{equation}
where $T_i$ is some arbitrary matrix of the form
\begin{equation}
    T_i = \T{t_i}.
\end{equation}
We will use the notation $\e{1}$ and $\e{2}$ to denote the standard basis elements in $\mathbb{R}^2$:
\begin{equation}
    \e{1}=\begin{bmatrix} 1 \\ 0\end{bmatrix} \quad\text{and}\quad \e{2}=\begin{bmatrix} 0 \\ 1\end{bmatrix}.
\end{equation}

It will be useful to begin by establishing that the result holds for edge cases (\emph{i.e.} $t_i\equiv2$ and $t_i\equiv-2$), before moving on to the general case.
\begin{prop} \label{formula_2}
    For any $n\in\mathbb{N}$, it holds that
    \begin{equation*}
        \T{2}^n = \T{2} + (n-1)\begin{bmatrix}
        -1 & -1 \\ 1 & 1 
    \end{bmatrix},
    \end{equation*} 
    the trace of which is always equal to 2.
\end{prop}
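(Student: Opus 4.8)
The plan is to prove the matrix identity by induction on $n$, after first isolating the algebraic structure that makes the pattern collapse. Write $N := \begin{bmatrix} -1 & -1 \\ 1 & 1 \end{bmatrix}$ for the matrix appearing on the right-hand side, so the claim reads $\T{2}^n = \T{2} + (n-1)N$. The first thing I would record is the decomposition $\T{2} = I + N$, immediate by inspection, together with the observation that $N$ is nilpotent: a one-line multiplication gives $N^2 = 0$. These two facts are the engine of the whole argument, since they say that $\T{2}$ is a parabolic element of $\SL$ whose only eigenvalue is $1$.

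Given this, there are two essentially equivalent routes. The cleanest is to invoke the binomial theorem: because $I$ and $N$ commute and $N^2 = 0$, every term beyond the linear one vanishes, so $\T{2}^n = (I+N)^n = I + nN$. Rewriting $I = \T{2} - N$ then gives $I + nN = \T{2} + (n-1)N$, which is exactly the claim.

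Alternatively, I would run a direct induction. The base case $n=1$ is trivial. In the inductive step I would compute $\T{2}^{n+1} = (\T{2} + (n-1)N)\,\T{2} = \T{2}^2 + (n-1)N\T{2}$, where the sole computational input is the identity $N\T{2} = N$ (equivalent to $N^2=0$ via $\T{2}=I+N$). Combined with $\T{2}^2 = \T{2} + N$, this yields $\T{2} + N + (n-1)N = \T{2} + nN$, closing the induction. The trace statement then follows for free: since $\tr N = -1 + 1 = 0$, we get $\tr(\T{2} + (n-1)N) = \tr \T{2} + (n-1)\tr N = 2$ for every $n$.

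I do not expect a serious obstacle here; the entire content is in spotting the nilpotent structure $N^2 = 0$, after which the identity is forced. The only point requiring care is the bookkeeping of the index shift between the natural form $I + nN$ and the stated form $\T{2} + (n-1)N$, and confirming that the edge case $n=1$ aligns the two conventions.
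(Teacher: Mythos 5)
Your proposal is correct, and both routes you describe go through. Your fallback induction is essentially the paper's own proof: the paper's displayed computation is exactly the inductive step, multiplying the claimed formula for the $n$-th power by $\T{2}$ and checking by bare matrix arithmetic that the coefficient of $N$ advances by one (the left-hand side of the paper's display is labelled $\T{2}^{n}$ where it really computes $\T{2}^{n+1}$, a harmless index slip, and the base case is left implicit). Your primary route is a genuinely different and cleaner packaging of the same fact: by isolating $\T{2} = I + N$ with $N^2 = 0$ you identify $\T{2}$ as unipotent, so $(I+N)^n = I + nN$ falls out of the binomial theorem, the stated form $\T{2} + (n-1)N$ is pure bookkeeping, and the trace claim is immediate from $\tr N = 0$. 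What this buys beyond brevity is explanatory power: the same observation disposes of Proposition~\ref{formula_-2} at no extra cost, since $-\T{-2}$ is again the identity plus a square-zero matrix, which is why both edge cases produce traces of modulus exactly $2$ for all $n$. The one computational input you must actually display is the verification $N^2 = 0$ (equivalently $N\T{2} = N$), which you have flagged; with that written out there is no gap.
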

\begin{proof}
    \begin{align*}
        \T{2}^{n} &= \T{2} \cdot \left\{ \T{2} + (n-1)\begin{bmatrix}
            -1 & -1 \\ 1 & 1 
        \end{bmatrix} \right\} \\
        &= \begin{bmatrix}
            -1 & -2 \\ 2 & 3 
        \end{bmatrix} + (n-1) \begin{bmatrix}
            -1 & -1 \\ 1 & 1 
        \end{bmatrix} \\
        &= \T{2} + n \begin{bmatrix}
            -1 & -1 \\ 1 & 1 
        \end{bmatrix} 
    \end{align*}
\end{proof}

\begin{prop}\label{formula_-2}
    For any $n\in\mathbb{N}$, it holds that
    \begin{equation*}
        \T{-2}^n = (-1)^n \left\{ (n-1) \begin{bmatrix}
        -1 & 1 \\ -1 & 1 
        \end{bmatrix} - \T{-2} \right\},
    \end{equation*}
    the trace of which is always equal to $\pm2$.
\end{prop}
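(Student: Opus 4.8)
The plan is to proceed by induction on $n$, exactly mirroring the argument used for Proposition~\ref{formula_2}. Writing $N = \begin{bmatrix} -1 & 1 \\ -1 & 1 \end{bmatrix}$ for the rank-one matrix appearing in the statement, the base case $n=1$ is immediate since $(-1)^1\{0\cdot N - \T{-2}\} = \T{-2}$. For the inductive step I would assume the formula holds for $n$ and compute $\T{-2}^{n+1} = \T{-2}\cdot\T{-2}^n = (-1)^n\,\T{-2}\bigl\{(n-1)N - \T{-2}\bigr\}$, so that everything reduces to understanding the two products $\T{-2}N$ and $\T{-2}^2$.

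The two identities that drive the computation are $\T{-2}N = -N$ and $\T{-2}^2 = N - \T{-2}$, both of which are single $2\times2$ matrix multiplications. Substituting these gives $(-1)^n\{-(n-1)N - (N - \T{-2})\} = (-1)^{n+1}\{nN - \T{-2}\}$, which is precisely the claimed formula with $n$ replaced by $n+1$, closing the induction. The only place demanding care is the bookkeeping of the alternating prefactor $(-1)^n$: unlike the $t\equiv 2$ case there is a genuine sign flip at each step, so one must check that pulling the extra minus sign through correctly converts $(-1)^n$ into $(-1)^{n+1}$.

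Conceptually, the reason the formula takes this shape is that $\T{-2}$ is a parabolic element of $\SL$: it has trace $-2$, hence a repeated eigenvalue $-1$, and is not diagonalisable. Indeed $\T{-2} = -(I + N)$ with $N^2 = 0$, so the binomial theorem truncates to give $\T{-2}^n = (-1)^n(I+nN)$ directly, and rewriting $I = -N - \T{-2}$ recovers the stated expression. I would likely present the clean inductive version in the body while noting this nilpotent structure as the underlying explanation. Finally, the trace claim follows instantly: since $\tr N = 0$ and $\tr\T{-2} = -2$, the stated formula yields $\tr(\T{-2}^n) = (-1)^n\bigl((n-1)\cdot 0 - (-2)\bigr) = 2(-1)^n = \pm 2$.
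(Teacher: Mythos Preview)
Your proposal is correct and follows essentially the same inductive route as the paper: multiply the hypothesised formula for $\T{-2}^n$ on the left by $\T{-2}$ and simplify, tracking the alternating sign. Your additional observation that $\T{-2} = -(I+N)$ with $N^2 = 0$, so that $\T{-2}^n = (-1)^n(I+nN)$ by the truncated binomial expansion, is a clean conceptual explanation that the paper does not give and is worth including.
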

\begin{proof}
    \begin{align*}
        \T{-2}^{n} &= (-1)^n \T{-2} \left\{ (n-1) \begin{bmatrix}
            -1 & 1 \\ -1 & 1 
        \end{bmatrix} - \T{-2} \right\}\\
        &= (-1)^{n} n \begin{bmatrix} 
            -1 & 1 \\ -1 & 1 
        \end{bmatrix} + (-1)^{n+2} \T{-2}\\
        &= (-1)^{n} \left\{ n\begin{bmatrix} 
            -1 & 1 \\ -1 & 1
        \end{bmatrix} - \T{-2} \right\}
    \end{align*}
\end{proof}


We are now ready to handle the general case of $|t_i|\geq 2$.

\begin{lemma}\label{lem:proof2_1}
    Let $t_1,...,t_n\in\mathbb{R}$ be such that  $\min_{1\leq i\leq n}|t_i| \ge 2$, then 
    $$ \left| \e{2}^\dagger \Pi_1^n \e{2} \right| > \left| \e{1}^\dagger \Pi_1^n \e{2} \right| + 1 \quad\text{and}\quad \left| \e{2}^\dagger \Pi_1^n \e{2} \right| > \left| \e{2}^\dagger \Pi_1^n \e{1} \right| + 1. $$
\end{lemma}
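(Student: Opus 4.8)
The plan is to name the four entries, writing $\Pi_1^n = \left[\begin{smallmatrix} a_n & b_n \\ c_n & d_n \end{smallmatrix}\right]$, and to run an induction on $n$ off the one-step identity $\Pi_1^n = \Pi_1^{n-1} T_n$. The three quantities in the statement are exactly entries of this matrix: $\e{2}^\dagger \Pi_1^n \e{2} = d_n$, $\e{1}^\dagger \Pi_1^n \e{2} = b_n$ and $\e{2}^\dagger \Pi_1^n \e{1} = c_n$. Multiplying $\Pi_1^{n-1}$ by $T_n = \T{t_n}$ on the right gives the update rules $a_n = b_{n-1}$, $c_n = d_{n-1}$, $b_n = t_n b_{n-1} - a_{n-1}$ and $d_n = t_n d_{n-1} - c_{n-1}$. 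The structural fact I would lean on is that the left column of $\Pi_1^n$ is the right column of $\Pi_1^{n-1}$, i.e.\ $a_n = b_{n-1}$ and $c_n = d_{n-1}$; in particular the second inequality $|d_n| > |c_n| + 1$ is simply $|d_n| > |d_{n-1}| + 1$.

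First I would dispatch this second inequality on its own, since it only involves the scalar recurrence $d_n = t_n d_{n-1} - d_{n-2}$ (using $c_{n-1} = d_{n-2}$). The reverse triangle inequality with $|t_n| \ge 2$ gives $|d_n| \ge |t_n|\,|d_{n-1}| - |d_{n-2}| \ge 2|d_{n-1}| - |d_{n-2}|$, and feeding in the inductive hypothesis $|d_{n-2}| < |d_{n-1}| - 1$ yields $|d_n| > |d_{n-1}| + 1$ at once. This step is self-sustaining: the bound at level $n$ needs only the same bound at level $n-1$.

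The hard part will be the first inequality $|d_n| > |b_n| + 1$. A naive triangle-inequality estimate of $b_n = t_n b_{n-1} - b_{n-2}$ is useless, because it introduces $|b_{n-2}|$, which neither hypothesis controls and which can be comparable to $|d_n|$. My resolution is to replace the recurrence for $b_n$ by the determinant constraint $\det \Pi_1^n = 1$: since $a_n = b_{n-1}$ and $c_n = d_{n-1}$, this reads $b_{n-1} d_n - b_n d_{n-1} = 1$, so $|b_n|\,|d_{n-1}| = |b_{n-1} d_n - 1| \le |b_{n-1}|\,|d_n| + 1$. As $|d_{n-1}| > 0$, the target $|b_n| + 1 < |d_n|$ reduces, via this bound, to $|d_n|\bigl(|d_{n-1}| - |b_{n-1}|\bigr) > |d_{n-1}| + 1$, and this follows by chaining the first inequality at level $n-1$ (which gives $|d_{n-1}| - |b_{n-1}| > 1$, hence $|d_n|(\dots) > |d_n|$) with the second inequality at level $n$ just established (which gives $|d_n| > |d_{n-1}| + 1$).

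Finally I would check the base case $n=1$, where $\Pi_1^1 = T_1$ gives $b_1 = -1$, $c_1 = 1$, $d_1 = t_1$, so both inequalities read $|t_1| > 2$. This is where the strict inequality must be seeded; the induction then propagates strictness to all $n$, and one sees that on the boundary $|t_i| = 2$ (e.g.\ all $t_i = 2$) the inequalities degenerate to equalities, consistent with Propositions~\ref{formula_2}--\ref{formula_-2}. The single delicate point is the step above: recognising that the unit determinant, rather than the raw recurrence for $b_n$, is what couples $b_n$ to the well-behaved quantity $d_n$.
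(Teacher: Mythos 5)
Your argument is correct, but your route to the first inequality is genuinely different from the paper's. For the second inequality you do essentially what the paper does: peel $T_n$ off the right, note that $\e{2}^\dagger\Pi_1^n\e{1}=\e{2}^\dagger\Pi_1^{n-1}\e{2}$, and combine the reverse triangle inequality with the inductive hypothesis. For the first inequality, however, the paper changes direction and peels $T_1$ off the \emph{left}, so that $\e{1}^\dagger\Pi_1^n\e{2}=-\e{2}^\dagger\Pi_2^n\e{2}$ becomes a bottom-right entry of the suffix product and the identical one-line estimate applies; its induction for that half therefore runs over the suffix products $\Pi_2^n$. You instead keep a single recursion direction and compensate for the uncontrolled $b$-recurrence with the unimodularity constraint $b_{n-1}d_n-b_nd_{n-1}=1$ (valid because $a_n=b_{n-1}$ and $c_n=d_{n-1}$), which couples the $b$-column to the well-behaved $d$-column. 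The reduction to $|d_n|\left(|d_{n-1}|-|b_{n-1}|\right)>|d_{n-1}|+1$ and the chaining with the level-$n$ second inequality check out, provided you prove the second inequality at level $n$ before the first and note $|d_{n-1}|>0$ to justify the division --- both of which you do. The paper's proof is shorter and more symmetric, since both halves become the same computation; yours works entirely with the prefix products $\Pi_1^1,\Pi_1^2,\dots$ and makes explicit the structural reason (the determinant) why the off-diagonal entry cannot outgrow the corner entry. One further point in your favour: you correctly observe that the base case requires $|t_1|>2$ strictly and that the boundary cases $t_i=\pm2$ degenerate to equalities (consistent with Propositions~\ref{formula_2} and~\ref{formula_-2}); taken literally, the lemma's hypothesis $\min_i|t_i|\ge 2$ is too weak for the strict conclusion at $n=1$, a point the paper glosses over by calling the base case ``trivially true''.
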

\begin{proof}
    This follows by induction. For $n=1$, it holds that
    \begin{equation}
        \Pi_1^1 \equiv \T{t} \quad\text{for some }t \in (-\infty,-2]\cup[2,\infty)
    \end{equation}
    so the result is trivially true.

    To prove the first of the two statements, consider splitting the product as
    \begin{equation} \label{eq:split}
        \Pi_1^n = \T{t_1} \cdot \Pi_2^n,
    \end{equation}
    from which it is clear that 
    \begin{gather}
        \e{1}^\dagger \Pi_1^n \e{2} = -\e{2}^\dagger \Pi_2^n \e{2}, \label{eq:proof2_1_1} \\
        \e{2}^\dagger \Pi_1^n \e{2} = \e{1}^\dagger \Pi_2^n \e{2} + t_1\e{2}^\dagger \Pi_2^n \e{2}. \label{eq:proof2_1_2}
    \end{gather}
    Where $\dagger$ denotes transpose. Taking absolute values gives
    \begin{equation}
         \left| \e{2}^\dagger \Pi_1^n \e{2} \right| \ge \left|\left\{ \left| t_1\e{2}^\dagger \Pi_2^n \e{2} \right| - \left| \e{1}^\dagger \Pi_2^n \e{2} \right| \right\}\right|,
    \end{equation}
    which can be simplified to see that
    \begin{align}
         \left| \e{2}^\dagger \Pi_1^n \e{2} \right| &\ge
         \left| t_1\e{2}^\dagger \Pi_2^n \e{2} \right| - \left| \e{1}^\dagger \Pi_2^n \e{2} \right| \nonumber \\
            &= \left(|t_1|-1\right)\left| \e{2}^\dagger \Pi_2^n \e{2} \right| + \left| \e{2} \Pi_2^n \e{2} \right| - \left| \e{1}^\dagger \Pi_2^n \e{2} \right| \nonumber\\
            &> \left(|t_1|-1\right) \left| \e{2}^\dagger \Pi_2^n \e{2} \right| + 1 \nonumber \\
            &> \left| \e{2}^\dagger \Pi_2^n \e{2} \right| + 1, \nonumber 
    \end{align}
    where the penultimate inequality follows from the inductive hypothesis. Using \eqref{eq:proof2_1_1}, this becomes  
        \begin{align}
            \left| \e{2}^\dagger \Pi_1^n \e{2} \right| &> \left| \e{1}^\dagger \Pi_1^n \e{2} \right| + 1.
        \end{align}

    The second statement can be shown using a similar approach. We use the formula
    \begin{equation}
        \Pi_1^n = \Pi_1^{n-1} \cdot \T{t_n},
    \end{equation}
    to see that
    \begin{gather}
        \e{2}^\dagger \Pi_1^n \e{1} = \e{2}^\dagger \Pi_1^{n-1} \e{2}, \label{eq:proof2_2_1}\\
        \e{2}^\dagger \Pi_1^n \e{2} = -\e{2}^\dagger \Pi_1^{n-1} \e{1} + t_n \e{2}^\dagger \Pi_1^{n-1} \e{2} \label{eq:proof2_2_2}.
    \end{gather}
    Therefore, we have that
    \begin{align*}
        \left| \e{2}^\dagger \Pi_1^n \e{2} \right| &\ge \left|\left\{ \left| t_n \e{2}^\dagger \Pi_1^{n-1} \e{2} \right| - \left| \e{2}^\dagger \Pi_1^{n-1} \e{1} \right| \right\} \right| \\
        &= \left| t_n \e{2}^\dagger \Pi_1^{n-1} \e{2} \right| - \left| \e{2}^\dagger \Pi_1^{n-1} \e{1} \right| \nonumber \\
        &= \left( |t_n| - 1 \right) \left| \e{2}^\dagger \Pi_1^{n-1} \e{2} \right| + \left| \e{2}^\dagger \Pi_1^{n-1} \e{2} \right| - \left| \e{2}^\dagger \Pi_1^{n-1} \e{1} \right| \nonumber \\
        &> \left| \e{2}^\dagger \Pi_1^{n-1} \e{2} \right| + 1 \nonumber \\
        & = \left| \e{2}^\dagger \Pi_1^n \e{1} \right| + 1,
    \end{align*}
    which proves the second condition.
\end{proof}

\begin{theorem}\label{thm:general_result}
    If for some arbitrary $n\in\mathbb{N}$, for all $i \le n$, $|t_i|>2$, then 
    \begin{equation*} \left| \tr\left\{ \prod_1^n \T{t_i} \right\} \right| > 2. \end{equation*}
\end{theorem}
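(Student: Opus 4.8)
The plan is to extract the entries of the product matrix and combine the two one-sided estimates of Lemma~\ref{lem:proof2_1} with the determinant constraint that is available for free, since every factor lies in $\SL$. Writing $\Pi_1^n = \begin{bmatrix} a & b \\ c & d \end{bmatrix}$, so that $a = \e{1}^\dagger \Pi_1^n \e{1}$, $b = \e{1}^\dagger \Pi_1^n \e{2}$, $c = \e{2}^\dagger \Pi_1^n \e{1}$ and $d = \e{2}^\dagger \Pi_1^n \e{2}$, the object of interest is the lower bound on $|\tr(\Pi_1^n)| = |a+d|$. Since $|t_i|>2$ for every $i$, Lemma~\ref{lem:proof2_1} applies and gives the strict inequalities $|d| > |b| + 1$ and $|d| > |c| + 1$; in particular $|d| > 1$, so $d \neq 0$ and division by $d$ is legitimate.

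The crucial move is to eliminate the entry $a$, about which the lemma says nothing directly, by using $\det \Pi_1^n = 1$. This yields $ad - bc = 1$, hence $a = (1 + bc)/d$, and therefore
\begin{equation*}
    \tr(\Pi_1^n) = a + d = \frac{1 + bc + d^2}{d}, \qquad\text{so that}\qquad \bigl|\tr(\Pi_1^n)\bigr| = \frac{\bigl|1 + bc + d^2\bigr|}{|d|}.
\end{equation*}
It then remains only to show that the numerator exceeds $2|d|$.

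This last estimate is where the two inequalities from the lemma are multiplied together. They give $|bc| = |b|\,|c| < (|d|-1)^2$, hence $bc > -(|d|-1)^2$, and consequently
\begin{equation*}
    1 + bc + d^2 > 1 - (|d|-1)^2 + d^2 = 2|d| > 0.
\end{equation*}
Because the numerator is strictly positive it coincides with its own absolute value, and dividing through by $|d|$ delivers $|\tr(\Pi_1^n)| > 2$, which is the claim.

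I expect no genuine obstacle once the correct framing is chosen: the entire content is the observation that the lemma controls the off-diagonal entries and the lower-right entry relative to one another, while the determinant relation is what pins down the remaining diagonal entry and hence the full trace. The only point requiring a little care is converting the two separate one-sided bounds into the single product bound $|bc| < (|d|-1)^2$, after which the inequality $1 + bc + d^2 > 2|d|$ is forced. The strict hypothesis $|t_i| > 2$ is exactly what is needed to make the lemma's inequalities strict, so the boundary cases $t_i \equiv \pm 2$ treated in Propositions~\ref{formula_2} and~\ref{formula_-2} (where the trace can equal $\pm 2$) require no separate handling here.
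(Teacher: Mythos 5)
Your proof is correct, and the closing step is genuinely different from the paper's. Both arguments rest on the same key ingredient, Lemma~\ref{lem:proof2_1}, but the paper applies the lemma to the shorter product $\Pi_2^n$ after peeling off one factor via $\Pi_1^n = \T{t_1}\Pi_2^n$, writing $\tr\Pi_1^n = -\e{2}^\dagger\Pi_2^n\e{1} + \e{1}^\dagger\Pi_2^n\e{2} + t_1\e{2}^\dagger\Pi_2^n\e{2}$ and then estimating with the reverse triangle inequality, so that the surplus $|t_1|-2>0$ of the peeled-off factor supplies the strictness. You instead apply the lemma to the \emph{full} product and close with unimodularity: from $ad-bc=1$ you get $\tr\Pi_1^n = (1+bc+d^2)/d$, and the two one-sided bounds $|b|,|c| < |d|-1$ combine into $bc > -(|d|-1)^2$, forcing $1+bc+d^2 > 2|d|$. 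This isolates a clean standalone fact — any $M\in\SL$ whose $(2,2)$ entry dominates both off-diagonal entries by more than $1$ in absolute value has $|\tr M|>2$ — and avoids re-expanding the matrix product a second time. One small point in your favour that is worth being aware of: as literally stated the lemma assumes only $\min_i|t_i|\ge 2$ but claims strict inequalities, which fail at the boundary (e.g.\ $\T{2}^n$ has $d=n+1$, $b=-n$, so $|d|=|b|+1$ exactly); your explicit remark that you invoke the lemma only under the strict hypothesis $|t_i|>2$, where the induction does deliver strict inequalities, is exactly the right amount of care, and your argument is not affected by this edge case.
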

\begin{proof}
    Re-using the simple formula \eqref{eq:split}, we can see that
    \begin{equation}
        \tr \Pi_1^{n} = \tr\left\{ \T{t_1} \Pi_2^{n} \right\} 
        = -\e{2}^\dagger \Pi_2^{n} \e{1} + \e{1}^\dagger \Pi_2^{n} \e{2} + t_1 \e{2}^\dagger \Pi_2^{n} \e{2}.
    \end{equation}
    Taking absolute values and applying Lemma~\ref{lem:proof2_1} gives that
    \begin{align*}
        \left| \tr \Pi_1^{n} \right| &\ge \left|\left\{ \left| t_1 \e{2}^\dagger \Pi_2^{n} \e{2} \right| - \left| \e{1}^\dagger \Pi_2^{n} \e{2} \right| - \left| \e{2}^\dagger \Pi_2^{n} \e{2} \right| \right\}\right| \\
        &= \left| t_1 \e{2}^\dagger \Pi_2^{n} \e{2} \right| - \left| \e{1}^\dagger \Pi_2^{n} \e{2} \right| - \left| \e{2}^\dagger \Pi_2^{n} \e{2} \right| \\
        &> \left\{ |t_1|-2 \right\} \left| \e{2}^\dagger \Pi_2^{n} \e{2} \right| + 2 \\
        &> 2. \qedhere
    \end{align*}
\end{proof}

Theorem~\ref{thm:general_result} is the main theoretical result of this work. It shows that the product of a finite number of matrices in $\Tc$ will be an element of $\M$. In other words, given a complex periodic material that has a unit cell with length $n$, if there is an eigenvalue that is in the band gaps of all of the $n$ periodic materials associated to the $n$ individual sites, then that eigenvalue must be in a band gap of the complex periodic material. Since the intersection of a finite number of open intervals is itself an open interval, any such eigenvalues must appear in continuous bands. These are the sought hierarchical band gaps.

It should be emphasised that Theorem~\ref{thm:general_result} describes a sufficient but not necessary condition for the complex periodic material to have a band gap. Indeed, it is easy to construct examples where the trace of the product has absolute value greater than two in spite of the fact that there is at least one $i$ for which $|t_i|\leq 2$ (typically, many of the other values would be $|t_i|\gg 2$ to compensate). Predicting the spectrum in full needs more knowledge of the specific configuration of the system than is required for  Theorem~\ref{thm:general_result}. Nevertheless, the simple condition derived in Theorem~\ref{thm:general_result} is sufficient for predicting the main band gaps in complex systems where there is some overlap between the spectra of the individual constituent elements.

\section{Examples} \label{sec:examples}

The difference equation \eqref{eq:difference} is a canonical model for wave propagation in one-dimensional systems. Some specific examples of appropriate systems are coupled masses and springs, coupled pendulums and locally resonant phononic crystals. These examples are surveyed below, to demonstrate the utility of Theorem~\ref{thm:general_result}.

\subsection{Masses and springs} \label{sec:massspring}

The simplest canonical model for wave propagation in a lattice structure is that of an array of masses coupled by springs, as depicted in figure~\ref{fig:masses_1}. These models have been studied as early as the 17\textsuperscript{th} century (such as by Newton in his Principia of 1686) and remain one of the primary toy models for exploring wave dynamics in both quantum and classical settings \cite{brillouin1953wave}.

\begin{figure}[H]
    \centering
    \includegraphics{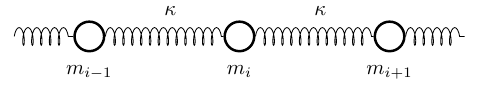}
    \caption{The canonical difference equation studied here can be used to model a system of masses (with modulated masses $m_i$) coupled by identical springs (with spring constant $\kappa$).}
    \label{fig:masses_1}
\end{figure}

We consider the propagation of time-harmonic waves with frequency $\omega$ through a chain of masses with mass $m_i$ that are connected by identical springs with stiffness $\kappa$. Then, the dynamics are described by the difference equation \cite{makwana2013localised}
\begin{equation}\label{eq:springs_1}
    u_{i+1}+u_{i-1}-2u_i=-\frac{\omega^2m_i}{\kappa}u_i.
\end{equation}
This is equivalent to \eqref{eq:difference} with $V(i)-E=\omega^2m_i/\kappa-2$.
This difference equation can be rewritten as
\begin{equation}\label{eq:springs_2}
    \begin{bmatrix}
        u_i \\ u_{i+1} 
    \end{bmatrix} = -\begin{bmatrix}
        0 & -1 \\ 1 & \frac{\omega^2 m_i}{\kappa} -2
    \end{bmatrix} \begin{bmatrix}
        u_{i-1} \\ u_i 
    \end{bmatrix}
\end{equation}
where the matrix is an element of $\Tc$ if and only if
\begin{equation}
    \left| \frac{\omega^2 m_i}{\kappa} - 2\right|>2.
\end{equation}
It is immediate that for any simple singly periodic system with this matrix there is one high-frequency band gap that occurs when $\omega^2 > \frac{4\kappa}{m_i}$. Therefore, it is expected that a complex periodic arrangement of masses and springs will also inherit a high-frequency band gap. 

An example is shown in Figure~\ref{fig:masses_2}, where we study a system of 5 masses with masses $m_1 = 0.6$, $m_2 = 1$, $m_3 = 0.8$, $m_4 = 0.5$ and $m_5 = 1$, coupled with identical springs with spring constant $\kappa=0.75$. The top five rows show the pattern of pass and stop bands for each of the five periodic systems corresponding to the single mass $m_i$ repeated periodically. The bottom row shows the pass and stop bands for the complex system obtained by combining the five masses. The hierarchical band gap, as predicted by our theory, is highlighted by the hatched region. The bottom of the (high-frequency) hierarchical band gap is highlighted by the dashed line, which occurs at $4\kappa/m_N$ where $N = \arginf_{i\le n} m_i$.

\begin{figure}
    \centering
    \includegraphics{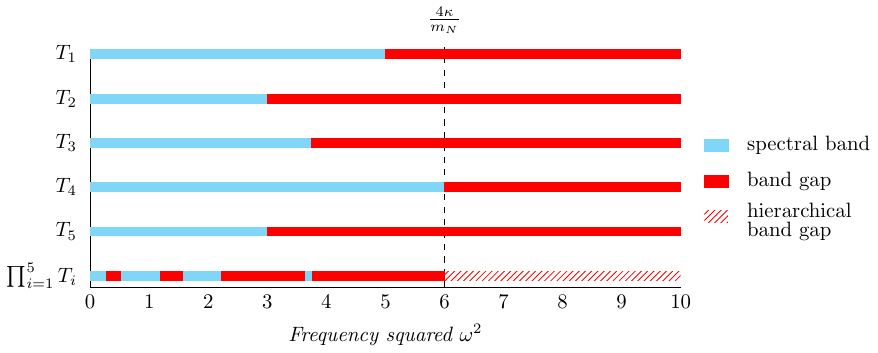}
    \caption{Hierarchical band gaps of a system of coupled masses and springs, where the complex system inherits the common high-frequency band gap of its constituent elements. A simple periodic structure is considered, composed of five constituent elements.}
    \label{fig:masses_2}
\end{figure}

The high-frequency band gap of a modulated system of masses and springs is widely observed in the physical literature. Modifying the asymptotic arguments from \cite[Theorem~4.1]{davies2023super}, for example, it is easy to show that any periodic mass-spring system will exhibit a band gap at sufficiently high frequencies. The theory developed in the present work extends this result by giving a quantitative estimate for the minimum of this high-frequency band gap, showing that it is at most the minimum of the intersection of band gaps of the constituent elements.

It is important to note that our theory for hierarchical band gaps does not contain any information about the relative positioning of the masses within the unit cell of the complex material. As a result, the hierarchical band gap will exist for any arrangement of the constituent masses (including adding many repetitions of some of them). This is one reason why we cannot expect, in general, the hierarchical band gap to provide tight bounds on the band gaps of the complex periodic system.

\subsection{Coupled pendulums}

The addition of local interactions is an immediate and important development of the mass-spring model. This can be achieved, for example, by adding pendulums to the masses, as depicted in Figure~\ref{fig:pendulums}. This is a canonical model for wave propagation in locally resonant systems and mimics the key properties of the tight-binding models used in quantum physics \cite{economou2006quantum}.

\subsubsection{Homogeneous Pendulums}

Consider the system presented in Figure~\ref{fig:pendulums} with identical springs of stiffness $\kappa$ and identical pendulums of length $l$ attached to masses, which are allowed to vary. We will address the case of non-identical pendulums in section~\ref{sec:pendvary}.

\begin{figure}[H]
    \centering
    \includegraphics{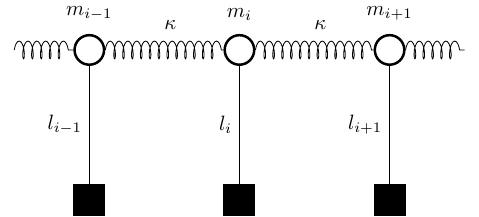}
    \caption{A system of coupled pendulums (which are coupled by indentical springs) can be described by difference equations of the form considered in this work.}
    \label{fig:pendulums}
\end{figure}

Let $u_n$ be the one-dimensional displacement of the sphere with mass $m_n$, then resolving the forces due to both the springs and the pendulum we arrive at the difference equation \cite{markos2008wave}
\begin{equation}\label{eq:pendulums_1}
    \left( \omega_i^2 + \frac{1}{m_i}(2\kappa - \omega^2)\right) u_i = \frac{1}{m_i}(\kappa u_{i-1} + \kappa u_{i+1}).
\end{equation}
Here, $\kappa$ is the spring constant, $\omega$ is the frequency of the wave in the system and $\omega_i$ is the eigenfrequency of the pendulum with index $i$. This eigenfrequency is given by $\omega_i^2= \frac{g}{l}$ where $l$ is the length of the pendulum and $g$ is the acceleration due to gravity. For simplicity, in the case of identical pendulums we denote this as $\omega_0$ to remove dependence on $i$. We can rewrite \eqref{eq:pendulums_1} as the linear system
\begin{equation}\label{eq:pendulums_2}
    \begin{bmatrix}
        u_i \\ u_{i+1} 
    \end{bmatrix} = \begin{bmatrix}
        0 & 1 \\ -1 & 2 - \frac{\omega^2m_i}{\kappa} + \frac{\omega_0^2 m_i}{\kappa} 
    \end{bmatrix} \begin{bmatrix}
        u_{i-1} \\ u_i 
    \end{bmatrix}
\end{equation}
This matrix has trace larger than 2 if
$$ \left|2 - \frac{\omega^2m_i}{\kappa} + \frac{\omega_0^2 m_i}{\kappa}\right|>2, $$
which is equivalent to 
\begin{equation}\label{eq:pendulums_3}
    \omega^2 < \omega_0^2 \quad \text{or} \quad \omega^2 > \frac{4\kappa}{m_i} + \omega_0^2.
\end{equation}
We can see that the introduction of the pendulums has introduced a low-frequency band gap that was not present in the mass-spring system considered in section~\ref{sec:massspring}.

\begin{figure}[H]
    \centering
    \includegraphics{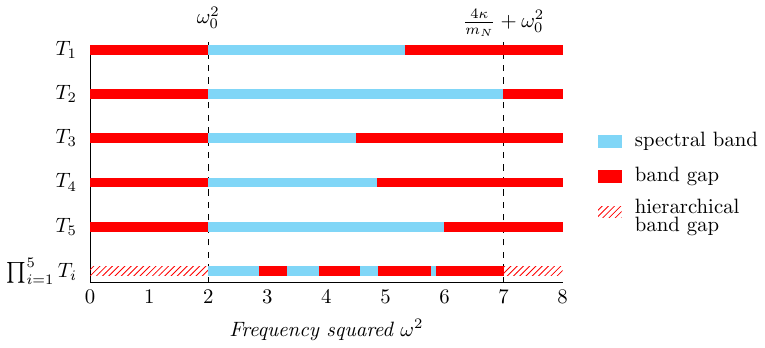}
    \caption{Hierarchical band gaps of a system of coupled pendulums, with a simple periodic unit cell consisting of five elements. The complex pendulum periodic structure inherits both high- and low-frequency hierarchical band gaps from the constituent systems.}
    \label{fig:pendulums_2}
\end{figure}

An example of the band gaps corresponding to a system of five masses and pendulums is shown in Figure~\ref{fig:pendulums_2}. The masses have been chosen to be such that $m_1 = 0.6$, $m_2 = 0.4$, $m_3 = 0.8$, $m_4 = 0.7$ and $m_5 = 0$, with $\kappa=0.5$ and $\omega_0^2 = 2$. This gives a range of different band gaps (with common lower limits due to the identical pendulums). The corresponding complex periodic system has hierarchical band gaps at either ends of its spectrum, as the complex system inherits both the low- and high-frequency band gaps from the constituent elements. Of course, the behaviour of the spectrum in the frequency range $\omega_0^2 < \omega < w_0^2 +4m_N\kappa$ is undetermined, a reminder of the sufficient but not necessary nature of the hierarchical band gap condition.

\subsubsection{Modulated Pendulums} \label{sec:pendvary}

More intricate spectra can be obtained by varying the lengths of the pendulums. In which case, the eigenfrequency $\omega_i$ in \eqref{eq:pendulums_1} varies and we obtain the linear system 
\begin{equation}\label{eq:pendulums_4}
    \begin{bmatrix}
        u_i \\ u_{i+1} 
    \end{bmatrix} = \begin{bmatrix}
        0 & 1 \\ -1 & 2 - \frac{\omega^2m_i}{\kappa} + \frac{\omega_n^2 m_i}{\kappa} 
    \end{bmatrix} \begin{bmatrix}
        u_{i-1} \\ u_i 
    \end{bmatrix}.
\end{equation}
Similar to \eqref{eq:pendulums_3}, we find the condition for each individual mass-pendulum system to support a band gap as
\begin{equation}\label{eq:pendulums_5}
    \omega^2 < \omega_i^2 \quad \text{or} \quad \omega^2 > \frac{4\kappa}{m_i} + \omega_i^2.
\end{equation}
Then, under the same assumptions as Figure~\ref{fig:pendulums_2}, we build a system with varying lengths.  In particular, we suppose that the pendulums have one of two alternating lengths: $l_{2n} = 20$ and $l_{2n+1}=5$.
If, for simplicity, we take $g=10$, then this leads to $\omega_{2n}^2 = 0.5$ and $\omega_{2n+1}^2 = 2$. Choosing the spring stiffness and masses as $\kappa = 0.5$, $m_1 = 1.2$, $m_2 = 2$, $m_3 = 1$, $m_4 = 2.2$ and $m_5 = 1.2$, we reach the pattern of spectral bands shown in Figure~\ref{fig:pendulums_3}.

\begin{figure}[H]
    \centering
    \includegraphics{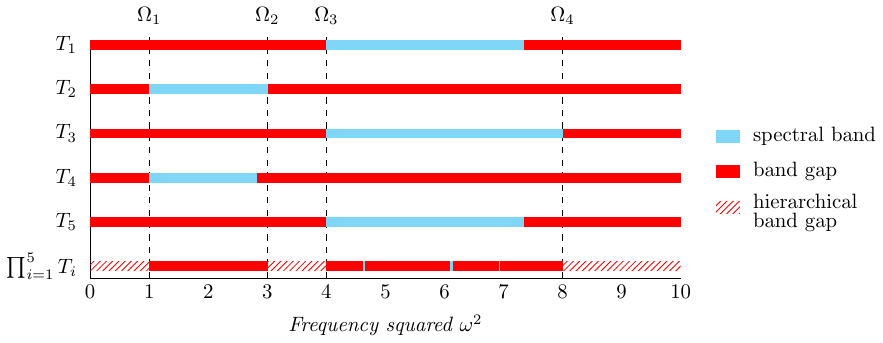}
    \caption{Hierarchical band gaps of a modulated array of coupled pendulums, again with a simple periodic unit cell consisting of five elements. Here, the modulation means that there is a mid-frequency hierarchical band gap in addition to the high- and low-frequency hierarchical band gaps (\emph{cf.} Figure~\ref{fig:pendulums_2}). The hierarchical band gaps are $0 < \omega^2 < \Omega_1$, $\Omega_2 < \omega^2 < \Omega_3$ and $\Omega_4 < \omega^2$, as specified in \eqref{eq:Omegai}.}
    \label{fig:pendulums_3}
\end{figure}

In Figure~\ref{fig:pendulums_3}, we can see that modulating the pendulum length causes the band gaps of the individual systems to be shifted. As a result, there are overlapping band gaps in the centre of the frequency range, meaning that the complex periodic system has a hierarchical band gap here, in addition to those at low and high frequencies, that were observed in Figure~\ref{fig:pendulums_2}. In particular, the hierarchical band gaps occur at $0 < \omega^2 < \Omega_1$, $\Omega_2 < \omega^2 < \Omega_3$ and $\Omega_4 < \omega^2$, where
\begin{equation} \label{eq:Omegai}
\begin{split}
    \Omega_1 = \min_{1\le i \le n} \omega_i^2, \qquad
    \Omega_2 = \min_{1\le i \le n} \frac{4\kappa}{m_i} + \omega_i^2, \\
    \Omega_3 = \max_{1\le i \le n} \omega_i^2, \qquad
    \Omega_4 = \max_{1\le i \le n} \frac{4\kappa}{m_i} + \omega_i^2.
\end{split}
\end{equation}
That $\Omega_2\leq \Omega_3$ in this example is the key property that leads to the hierarchical band gap in the centre of the spectrum. 

This model of coupled pendulums is significant due to its applications in understanding electron dynamics, as outlined in \cite{economou2006quantum}. The arrangement of alternating pendulums was chosen for simplicity of the example and could easily be generalised to a finite number of different lengths to accommodate the variety of problems arising from quantum physics.

\subsection{Resonant phononic crystals}

Lastly we consider periodic systems of coupled acoustic resonators. This could be either a one-dimensional acoustic waveguide containing resonant material inclusions, as studied by \emph{e.g.} \cite{zhao2018topological, leroy2009design} and sketched in Figure~\ref{fig:phononic}, or a coupled array of Helmholtz resonators, as studied by \emph{e.g.} \cite{fang2006ultrasonic, richoux2015disorder, sugimoto1992propagation, zhao2021subwavelength} and sketched in Figure~\ref{fig:helmholtz}. The local resonance in both systems can be modelled using a frequency-dependent effective mass term that is singular at resonance.

\begin{figure}[H]
    \centering
    \begin{subfigure}{\textwidth}
    \centering
    \includegraphics{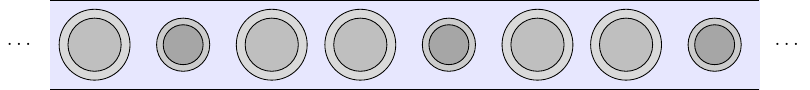}
    \caption{Phononic crystals composed of a soft material (such as air or rubber) surrounded by a much less compressible material (such as water) experience subwavelength local resonances. The resonant frequencies can be modulated by varying the size, shape or material parameters of the inclusions.}
    \label{fig:phononic}
    \end{subfigure}

    \vspace{0.5cm}

    \begin{subfigure}{\textwidth}
    \centering
        \includegraphics{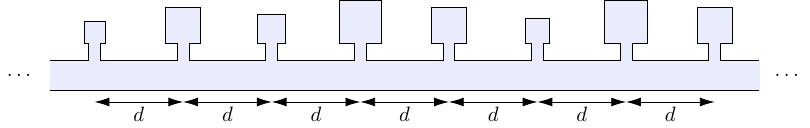}
    \caption{Coupled Helmholtz resonators. The resonant frequencies can be modulated by varying the size and shape of the Helmholtz resonators.}
    \label{fig:helmholtz}
    \end{subfigure}
    \caption{The propagation of acoustic waves in one-dimensional locally resonant phononic crystals can be modelled by the difference equations considered in this work. The local resonances could be due to, for example, Helmholtz resonators or high-contrast material inclusions.}
\end{figure}

The canonical model for wave propagation in locally resonant phononic systems is the difference equation
\begin{equation}\label{eq:resonant}
    \left(2\kappa - M_i^\mathrm{eff}(\omega) \omega^2\right) u_i = (\kappa u_{i-1} + \kappa u_{i+1}),
\end{equation}
where the frequency-dependent effective mass $M_i^\mathrm{eff}(\omega)$ is given by
\begin{equation}
    M_i^\mathrm{eff}(\omega)=M+\frac{m\omega_i^2}{\omega_i^2-\omega^2}.
\end{equation}
Here, $\omega_i$ is the resonant frequency of the locally resonant elements and $m$ and $M$ are constants. This was derived in the appendices of \cite{zhao2018topological}, for example.

An example of the hierarchical band gaps that exist within the discrete system \eqref{eq:resonant} is given in Figure~\ref{fig:phononic_3}. In this example, the effective mass term $M_i^{\text{eff}}$ has been modulated by varying the resonant frequency so that $\omega_1^2 = 6$, $\omega_2^2 = 4$, $\omega_3^2 = 8$, $\omega_4^2 = 6$ and $\omega_5^2 = 10$, with $m=0.5$ and $M=2$ fixed. In this case, all the unit cells have low frequency pass bands, so as a result there is no low frequency band gap. However, the resonances create common band gaps at intermediate frequencies, leading to a hierarchical band gap in the interval $\Omega_1 < \omega^2 < \Omega_2$, as highlighted in Figure~\ref{fig:phononic_3}. The constituent resonant elements also have high-frequency band gaps, that are similarly inherited by the complex structure and predicted by Theorem~\ref{thm:general_result}.

\begin{figure}
    \centering
    \includegraphics{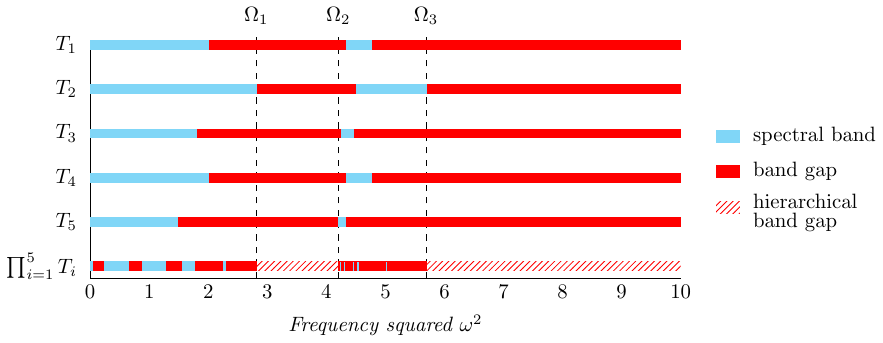}
    \caption{The hierarchical band gaps of a locally resonant phononic crystal with a complex unit cell inherits some of the band gaps from its constituent resonators, including a high-frequency hierarchical band gap. The hierarchical band gaps are $\Omega_1 < \omega^2 < \Omega_2$ and $\Omega_3 < \omega^2$.}
    \label{fig:phononic_3}
\end{figure}

\subsection{Fibonacci tilings}

An example of periodic materials with complex unit cells that have been studied extensively are those based on Fibonacci tilings. That is, given two simple materials A and B, a sequence of materials can be formed through the substitution rule
\begin{equation}
    \text{A}\mapsto\text{AB}, \quad \text{B}\mapsto\text{A}.
\end{equation}
This leads to the sequence of materials A, AB, ABA, ABAAB, ABAABABA, and so on. This sequence earns the name ``Fibonacci'' as each element can be obtained by combining the previous two and the number of elements in each unit cell is given by the corresponding Fibonacci number.

Systems based on the Fibonacci tiling have exotic properties, particularly in the limit of the sequence, which converges to a material that is non-periodic and quasicrystalline \cite{jagannathan2021fibonacci}. Interesting questions concern the system's topological properties \cite{kraus2012topological} and its ability to localise wave energy \cite{kohmoto1983localization}. Materials based on Fibonacci tilings have been used to create waveguides \cite{davies2022symmetry} and realise negative refraction \cite{morini2019negative}. 

An important question for wave systems based on Fibonacci tilings is how the Floquet-Bloch spectrum behaves in the limit of the sequence. It is known that the limiting system typically has a pattern of spectral bands that is a Cantor set \cite{kohmoto1984cantor, sutHo1989singular}. Further, it has been observed that the sequence is such that there are band gaps which emerge early in the sequence and persist for all subsequent iterations (sometimes known as \emph{super band gaps}) \cite{davies2023super, morini2018waves}.

The stability of band gaps in the sequence of Fibonacci tilings can be partially but straightforwardly explained using our theory of hierarchical band gaps. This is demonstrated in Figure~\ref{fig:pendulums_4}, where we consider the system of modulated coupled pendulums from Section~\ref{sec:pendvary}. We take the constituent materials A and B to be the materials with associated matrices $T_1$ and $T_2$ as in Figure~\ref{fig:pendulums_3}. Then,  we consider unit cells given by successive Fibonacci tilings:
\begin{equation}
    F_1 = T_1,\, F_2 = T_2,\, F_3 = T_2 T_1,\, F_4 = T_2 T_1 T_2,\, F_5 = T_2 T_1 T_2 T_2 T_1,\, F_6 = T_2 T_1 T_2 T_2 T_1 T_2 T_1 T_2.
\end{equation}

\begin{figure}[H]
    \centering
    \includegraphics{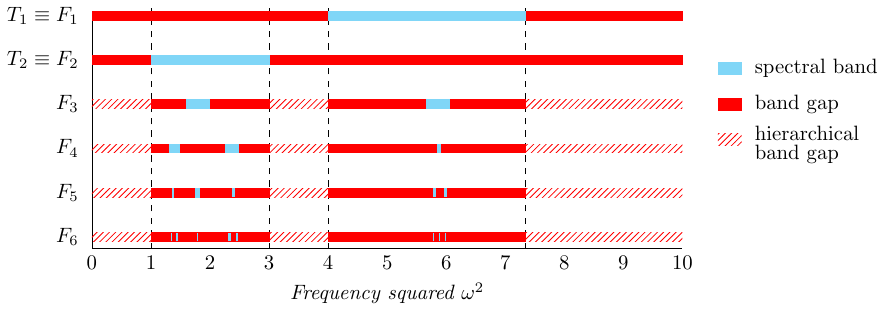}
    \caption{Coupled pendulums arranged in periodic unit cells based on Fibonacci tilings exhibit intricate spectra, some features of which can be predicted using the theory of hierarchical band gaps. The system inherits a high- and low-frequency band-gaps as well as a mid-frequency band gap.}
    \label{fig:pendulums_4}
\end{figure}

It can be seen from Figure~\ref{fig:pendulums_4} that the hierarchical band gaps, predicted by Theorem~\ref{thm:general_result}, persist throughout the sequence of Fibonacci tilings as the unit cell grows and the spectrum becomes increasingly intricate. Clearly, however, Theorem~\ref{thm:general_result} cannot hope to describe the exotic behaviour of the intricate pattern of spectral bands and gaps that emerges outside of these intervals. This behaviour is due to the geometric self similarity of the system (having arbitrarily large sections that repeat infinitely often, even in the limiting non-periodic system) and requires approaches similar to \emph{e.g.} \cite{sutHo1989singular, davies2023super} to handle. Nevertheless, particularly with wave control applications in mind, being able to predict some of the main spectral gaps without needing to invoke such sophisticated machinery is valuable.

\section{Concluding remarks}

This work provides a simple approach to predict band gaps of complex periodic systems by decomposing them into simpler (singly periodic) systems, whose spectra can often be calculated explicitly. This is achieved by re-framing the Floquet-Bloch spectral problem as a problem of matrix algebra. We have proved that, although the set $\M\subset\SL$ of real unimodular two-by-two matrices with trace greater than two is not closed under matrix multiplication, it is possible to write down a subset $\Tc$ of matrices whose products must be in $\M$. This result, stated in Theorem~\ref{thm:general_result}, exploits the symmetries of the subset $\Tc$. This implies that, given a collection of singly periodic systems (each described by a version of the difference equation \eqref{eq:difference}) that has a common band gap, any complex periodic system formed by combining their unit cells must have a (hierarchical) band gap at that frequency range.

Some valuable generalisations of this result would be, firstly, to discrete systems with modulated couplings in addition to the on-site modulation present in \eqref{eq:difference}. We conjecture that it should similarly be possible to formulate a subset of matrices in $\M$ whose elements correspond to these systems and whose products are themselves in $\M$. Further, generalisations to multi-dimensional systems (either discrete or differential) would be valuable but would require a modification of the two-by-two matrix framework used here (either to larger matrix systems or to continuous operators). Finally, it would be valuable to develop a theory capable of handling cases where one (or more) of the matrices in a large product has trace less than two. As is clear from the examples presented in section~\ref{sec:examples}, Theorem~\ref{thm:general_result} yields a sufficient but not necessary condition for a complex periodic system to have a band gap. While more refined approaches exist for specific examples (\emph{e.g.} Fibonacci tilings \cite{sutHo1989singular, davies2023super}), it would be useful to develop approaches in the spirit of the method developed in this work, that reveal fundamental principles for guiding the design of complex periodic materials without the need for expensive computations.

\section*{Declaration of Interests}

The authors do not work for, advise, own shares in, or receive funds from any organisation that could benefit from this article, and have declared no affiliation other than their research organisations.

\section*{Acknowledgement}
The work of B.D. was supported the Engineering and Physical Sciences Research Council (EPSRC) under grant number EP/X027422/1.

\bibliographystyle{abbrv}
\bibliography{references}{}
\end{document}